\documentclass[11pt,reqno]{amsart}
\usepackage{amscd,amssymb,amsmath,amsthm}
\usepackage[arrow,matrix]{xy}
\usepackage{graphicx}
\usepackage{cite}
\topmargin=0.1in \textwidth5.8in \textheight7.8in
\newtheorem{thm}[subsection]{Theorem}
\newtheorem{lemma}[subsection]{Lemma}
\newtheorem{pro}[subsection]{Proposition}

\newtheorem{defn}[subsection]{Definition}

\numberwithin{equation}{section} \setcounter{tocdepth}{1}

\newcommand{\G}{{\mathcal G}}

\newcommand{\s}{{\sigma}}
\newcommand{\de}{{\xi}}

\newcommand{\bea}{\begin{eqnarray}}
\newcommand{\eea}{\end{eqnarray}}

\newcommand{\R}{\mathbb{R}}






\begin{document}
\title[On Gibbs measure of a model]{Uniqueness of Gibbs Measure for Models With Uncountable Set of Spin Values on a Cayley Tree}

\author{Yu. Kh. Eshkabilov, F. H. Haydarov, U. A. Rozikov}

 \address{Yu.\ Kh.\ Eshkabilov\\ National University of Uzbekistan,
Tashkent, Uzbekistan.}
\email {yusup62@mail.ru}

\address{F.\ H.\ Haydarov\\ National University of Uzbekistan,
Tashkent, Uzbekistan.}

 \address{U.\ A.\ Rozikov\\ Institute of mathematics and information technologies,
Tashkent, Uzbekistan.}
\email {rozikovu@yandex.ru}

\begin{abstract} We consider models with nearest-neighbor
interactions and with the set $[0,1]$ of spin values, on a Cayley
tree of order $k\geq 1$.
 It is known that the "splitting Gibbs measures" of the model
can be described by solutions of a nonlinear integral
equation. For arbitrary $k\geq 2$ we find a sufficient condition under which the integral equation
has unique solution, hence under the condition the corresponding model has unique splitting Gibbs
measure.
\end{abstract}
\maketitle

{\bf Mathematics Subject Classifications (2010).} 82B05, 82B20 (primary);
60K35 (secondary)

{\bf{Key words.}} Cayley tree, configuration, Gibbs measures, uniqueness.

\section{Introduction} \label{sec:intro}

In this paper we consider models (Hamiltonians) with a nearest neighbor
interaction and uncountably many spin values on a Cayley tree.

One of the central problems in the theory of Gibbs measures is to
describe infinite-volume (or limiting) Gibbs measures
corresponding to a given Hamiltonian. The existence of such
measures for a wide class of Hamiltonians was established in the
ground-breaking work of Dobrushin (see, e.g. \cite{12}). However, a
complete analysis of the set of limiting Gibbs measures for a
specific Hamiltonian is often a difficult problem.

There are several papers devoted to models on Cayley trees, see for example
\cite{1}-\cite{5},\cite{p1a}, \cite{7}, \cite{8}, \cite{11}-\cite{p3}, \cite{13}, \cite{14}, \cite{16}. All these works devoted
to models with a finite set of spin values. These models have the following common property:
The  existence  of finitely many translation-invariant  and uncountable
numbers  of the  non-translation-invariant extreme Gibbs
measures. Also for several models (see, for example, \cite{p1, p1a, p2,p3}) it were proved that there exist
three  periodic  Gibbs  measures (which are invariant with respect to normal  subgroups  of  finite index of the group representation of Cayley tree) and there are
uncountable number of non-periodic Gibbs measures.

 In \cite{6} the Potts model with
a countable set of spin values on a Cayley tree is considered and it was showed
that the set of translation-invariant splitting Gibbs measures of the model contains at most one
point, independently on parameters of the Potts model with countable set of spin values
on Cayley tree. This is a crucial difference from the models with a finite set of spin values, since the last ones may have more than one translation-invariant Gibbs measures.

How "rich" is the set of translation-invariant Gibbs measures for models
with an uncountable spin values? In \cite{re} models with nearest-neighbor
interactions and with the set $[0,1]$ of spin values, on a Cayley
tree of order $k\geq 1$ are considered and we reduced the problem of describing
the "splitting Gibbs measures" of the model
to the description of the  solutions of  some nonlinear integral
equation. For $k=1$ we showed that the integral equation has a
unique solution. In case $k\geq 2$ some models (with the set
$[0,1]$ of spin values) which have a unique splitting Gibbs
measure are constructed.
In this paper we continue this investigations and give a sufficient condition on
Hamiltonian of the model with an uncountable set of spin values under which the model has unique
translation-invariant splitting Gibbs measure. But we have not any example of model
(with uncountable spin values) with more than one translation-invariant Gibbs measure.
So this is still an open problem to find such a model.

\section{Preliminaries}

A Cayley tree $\G^k=(V,L)$ of order $k\geq 1$ is an infinite
homogeneous tree (see \cite{1}), i.e., a graph without cycles, with
exactly $k+1$ edges incident to each vertices. Here $V$ is the set
of vertices and $L$ that of edges (arcs).

Consider models where the spin takes values in the set $[0,1]$, and is assigned to the vertexes
of the tree. For $A\subset V$ a configuration $\s_A$ on $A$ is an arbitrary function $\s_A:A\to
[0,1]$. Denote $\Omega_A=[0,1]^A$ the set of all configurations on $A$. A configuration $\sigma$ on
$V$ is then defined as a function $x\in V\mapsto\sigma (x)\in [0,1]$; the set of all configurations
is $[0,1]^V$. The (formal) Hamiltonian of the model is :
\begin{equation}\label{e1}
 H(\sigma)=-J\sum_{\langle x,y\rangle\in L}
\de_{\sigma(x)\sigma(y)},
\end{equation}
where $J \in R\setminus \{0\}$
and $\de: (u,v)\in [0,1]^2\to \de_{uv}\in R$ is a given bounded,
measurable function. As usually, $\langle x,y\rangle$ stands for
nearest neighbor vertices.

Let $\lambda$ be the Lebesgue measure on $[0,1]$.  On the set of all
configurations on $A$ the a priori measure $\lambda_A$ is introduced as
the $|A|$fold product of the measure $\lambda$. Here and further on
$|A|$ denotes the cardinality of $A$.   We consider a standard
sigma-algebra ${\mathcal B}$ of subsets of $\Omega=[0,1]^V$ generated
by the measurable cylinder subsets.
 A probability measure $\mu$ on $(\Omega,{\mathcal B})$
is called a Gibbs measure (with Hamiltonian $H$) if it satisfies the DLR equation, namely for any
$n=1,2,\ldots$ and $\sigma_n\in\Omega_{V_n}$:
$$\mu\left(\left\{\sigma\in\Omega :\;
\sigma\big|_{V_n}=\sigma_n\right\}\right)= \int_{\Omega}\mu ({\rm
d}\omega)\nu^{V_n}_{\omega|_{W_{n+1}}} (\sigma_n),$$ where $\nu^{V_n}_{\omega|_{W_{n+1}}}$ is the
conditional Gibbs density
$$ \nu^{V_n}_{\omega|_{W_{n+1}}}(\sigma_n)=\frac{1}{Z_n\left(
\omega\big|_{W_{n+1}}\right)}\exp\;\left(-\beta H
\left(\sigma_n\,||\,\omega\big|_{W_{n+1}}\right)\right),
$$
and $\beta={1\over T}$, $T>0 $ is temperature.
Here and below, $W_l$ stands for a `sphere' and $V_l$ for a
`ball' on the tree, of radius $l=1,2,\ldots$,
centered at a fixed vertex $x^0$ (an origin):
$$W_l=\{x\in V: d(x,x^0)=l\},\;\;V_l=\{x\in V: d(x,x^0)\leq l\};$$
and
$$L_n=\{\langle x,y\rangle\in L: x,y\in V_n\};$$
distance $d(x,y)$, $x,y\in V$, is the length of (i.e. the number of edges in) the shortest path
connecting $x$ with $y$. $\Omega_{V_n}$ is the set of configurations in $V_n$ (and $\Omega_{W_n}$
that in $W_n$; see below). Furthermore, $\sigma\big|_{V_n}$ and $\omega\big|_{W_{n+1}}$ denote the
restrictions of configurations $\sigma,\omega\in\Omega$ to $V_n$ and $W_{n+1}$, respectively. Next,
$\sigma_n:\;x\in V_n\mapsto \sigma_n(x)$ is a configuration in $V_n$ and
$H\left(\sigma_n\,||\,\omega\big|_{W_{n+1}}\right)$ is defined as the sum
$H\left(\sigma_n\right)+U\left(\sigma_n, \omega\big|_{W_{n+1}}\right)$ where
$$H\left(\sigma_n\right)
=-J\sum_{\langle x,y\rangle\in L_n}\de_{\sigma_n(x)\sigma_n(y)},$$
$$U\left(\sigma_n,
\omega\big|_{W_{n+1}}\right)=
-J\sum_{\langle x,y\rangle:\;x\in V_n, y\in W_{n+1}}
\de_{\sigma_n(x)\omega (y)}.$$
Finally, $Z_n\left(\omega\big|_{W_{n+1}}\right)$
stands for the partition function in $V_n$, with
the boundary condition $\omega\big|_{W_{n+1}}$:
$$Z_n\left(\omega\big|_{W_{n+1}}\right)=
\int_{\Omega_{V_n}} \exp\;\left(-\beta H \left({\widetilde\sigma}_n\,||\,\omega
\big|_{W_{n+1}}\right)\right)\lambda_{V_n}(d{\widetilde\sigma}_n).$$

Due to the nearest-neighbor character of the interaction, the
Gibbs measure possesses a natural Markov property: for given a
configuration $\omega_n$ on $W_n$, random configurations in
$V_{n-1}$ (i.e., `inside' $W_n$) and in $V\setminus V_{n+1}$
(i.e., `outside' $W_n$) are conditionally independent.

We use a standard definition of a translation-invariant measure
(see, e.g., \cite{12}).
 The main object of study in this
paper are translation-invariant Gibbs measures for the model (\ref{e1})
on Cayley tree. In \cite{re} this problem of description of such
measures was reduced to the description of the solutions of a nonlinear
integral equation. For finite and countable sets of spin values
this argument is well known (see, e.g.
\cite{2}-\cite{6},\cite{11},\cite{13},\cite{14},\cite{16}).

Write $x<y$ if the path from $x^0$ to $y$ goes through $x$. Call vertex $y$ a direct successor of
$x$ if $y>x$ and $x,y$ are nearest neighbors. Denote by $S(x)$ the set of direct successors of $x$.
Observe that any vertex $x\ne x^0$ has $k$ direct successors and $x^0$ has $k+1$.

Let $h:\;x\in V\mapsto h_x=(h_{t,x}, t\in [0,1]) \in R^{[0,1]}$ be mapping of $x\in V\setminus
\{x^0\}$ with $|h_{t,x}|<C$ where $C$ is a constant which does not depend on $t$.  Given
$n=1,2,\ldots$, consider the probability distribution $\mu^{(n)}$ on $\Omega_{V_n}$ defined by
\begin{equation}\label{e2}
\mu^{(n)}(\sigma_n)=Z_n^{-1}\exp\left(-\beta H(\sigma_n)
+\sum_{x\in W_n}h_{\sigma(x),x}\right),
\end{equation}
 Here, as before, $\sigma_n:x\in V_n\mapsto
\sigma(x)$ and $Z_n$ is the corresponding partition function:
\begin{equation}\label{e3} Z_n=\int_{\Omega_{V_n}}
\exp\left(-\beta H({\widetilde\sigma}_n) +\sum_{x\in W_n}h_{{\widetilde\sigma}(x),x}\right)
\lambda_{V_n}({d\widetilde\s_n}).
\end{equation}

The probability distributions $\mu^{(n)}$ are compatible if for any $n\geq 1$ and
$\sigma_{n-1}\in\Omega_{V_{n-1}}$:
\begin{equation}\label{e4}
\int_{\Omega_{W_n}}\mu^{(n)}(\sigma_{n-1}\vee\omega_n)\lambda_{W_n}(d(\omega_n))=
\mu^{(n-1)}(\sigma_{n-1}).
\end{equation} Here
$\sigma_{n-1}\vee\omega_n\in\Omega_{V_n}$ is the concatenation of
$\sigma_{n-1}$ and $\omega_n$. In this case there exists a unique
measure $\mu$ on $\Omega_V$ such that, for any $n$ and
$\sigma_n\in\Omega_{V_n}$, $\mu \left(\left\{\sigma
\Big|_{V_n}=\sigma_n\right\}\right)=\mu^{(n)}(\sigma_n)$.

\begin{defn} The measure $\mu$ is called {\it splitting
Gibbs measure} corresponding to Hamiltonian (\ref{e1}) and function
$x\mapsto h_x$, $x\neq x^0$. \end{defn}

 The following
statement describes conditions on $h_x$ guaranteeing compatibility
of the corresponding distributions $\mu^{(n)}(\sigma_n).$

 \begin{pro}\label{p1}\cite{re} {\it The probability distributions
$\mu^{(n)}(\sigma_n)$, $n=1,2,\ldots$, in}
(\ref{e2}) {\sl are compatible iff for any $x\in V\setminus\{x^0\}$
the following equation holds:
\begin{equation}\label{e5}
 f(t,x)=\prod_{y\in S(x)}{\int_0^1\exp(J\beta\de_{tu})f(u,y)du \over \int_0^1\exp(J\beta{\de_{0u}})f(u,y)du}.
 \end{equation}
Here, and below  $f(t,x)=\exp(h_{t,x}-h_{0,x}), \ t\in [0,1]$ and
$du=\lambda(du)$ is the Lebesgue measure.}
\end{pro}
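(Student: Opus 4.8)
The plan is to insert the definitions (\ref{e2})--(\ref{e3}) of $\mu^{(n)}$ and $\mu^{(n-1)}$ directly into the compatibility relation (\ref{e4}) and then exploit the product structure that the nearest-neighbor Hamiltonian forces along the tree; this adapts the well-known finite/countable-spin scheme to integrals. First I would split the edge set as $L_n=L_{n-1}\cup\{\langle x,y\rangle:x\in W_{n-1},\,y\in S(x)\}$, so that $H(\s_n)=H(\s_{n-1})-J\sum_{x\in W_{n-1}}\sum_{y\in S(x)}\de_{\s(x)\s(y)}$ and the interior factor $\exp(-\beta H(\s_{n-1}))$ pulls out of the $\omega_n$-integral. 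Since $W_n=\bigsqcup_{x\in W_{n-1}}S(x)$ and each $y\in W_n$ interacts only with its unique predecessor $x\in W_{n-1}$, both the boundary interaction and the field term $\sum_{y\in W_n}h_{\s(y),y}$ split into products indexed by $y$; by Fubini (justified by the uniform bounds on $\de$ and $h$) the integral over $\Omega_{W_n}$ factorizes. After the common factor $\exp(-\beta H(\s_{n-1}))$ cancels on both sides, (\ref{e4}) reduces to
\[\frac{Z_{n-1}}{Z_n}\prod_{x\in W_{n-1}}\prod_{y\in S(x)}\int_0^1\exp\big(J\beta\de_{\s(x)u}+h_{u,y}\big)\,du=\prod_{x\in W_{n-1}}\exp(h_{\s(x),x}).\]

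For the "only if" direction I would use that this identity holds for every boundary configuration $(\s(x))_{x\in W_{n-1}}\in[0,1]^{W_{n-1}}$. Fixing a single vertex $x_0$ at value $t$ and all other boundary spins at $0$, then dividing the resulting identity by the one with $\s(x_0)=0$ as well, every factor not indexed by $x_0$ together with the ratio $Z_{n-1}/Z_n$ cancels, leaving
\[\prod_{y\in S(x_0)}\frac{\int_0^1\exp(J\beta\de_{tu}+h_{u,y})\,du}{\int_0^1\exp(J\beta\de_{0u}+h_{u,y})\,du}=\exp(h_{t,x_0}-h_{0,x_0}).\]
Writing $\exp(h_{u,y})=\exp(h_{0,y})f(u,y)$ inside each integral and cancelling the constants $\exp(h_{0,y})$ between numerator and denominator turns the left side into $\prod_{y\in S(x_0)}\big(\int_0^1\exp(J\beta\de_{tu})f(u,y)\,du\big)\big/\big(\int_0^1\exp(J\beta\de_{0u})f(u,y)\,du\big)$, while the right side is exactly $f(t,x_0)$; this is precisely (\ref{e5}).

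For the converse I would substitute (\ref{e5}) back in: it shows that for each $x$ the product $\prod_{y\in S(x)}\int_0^1\exp(J\beta\de_{\s(x)u}+h_{u,y})\,du$ equals $\exp(h_{\s(x),x})$ times a constant $c_x$ independent of $\s(x)$, so the displayed identity becomes $\frac{Z_{n-1}}{Z_n}\big(\prod_{x\in W_{n-1}}c_x\big)\prod_{x}\exp(h_{\s(x),x})=\prod_{x}\exp(h_{\s(x),x})$. Thus the left-hand side of (\ref{e4}) is automatically a constant multiple of $\mu^{(n-1)}(\s_{n-1})$. The remaining point is that this constant equals $1$: integrating both sides over $\Omega_{V_{n-1}}$ and using that $\mu^{(n)}$ and $\mu^{(n-1)}$ are probability measures forces the proportionality constant to be $1$, which yields (\ref{e4}) exactly.

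I expect the main obstacle to be the bookkeeping of the factorization step together with pinning down the normalization, rather than any single hard estimate. The separation of variables is routine once one observes that the boundary spins range freely over $[0,1]^{W_{n-1}}$; the delicate points are (i) justifying the interchange of product and $\Omega_{W_n}$-integral in the uncountable-spin setting, handled by Fubini via the uniform bounds on $\de$ and $h$, and (ii) avoiding any explicit evaluation of $Z_{n-1}/Z_n$ by instead fixing the overall constant through the normalization of the two probability distributions.
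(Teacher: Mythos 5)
Your argument is correct and is essentially the standard proof of this statement (the paper itself only cites \cite{re} for it and gives no proof): substitute (\ref{e2}) into (\ref{e4}), use the nearest-neighbor structure to factorize the $\Omega_{W_n}$-integral over the successors of each $x\in W_{n-1}$, divide by the identity evaluated at the reference boundary configuration $\sigma\equiv 0$ to eliminate $Z_{n-1}/Z_n$ and obtain (\ref{e5}), and for the converse recover the normalization constant from the fact that both sides of (\ref{e4}) integrate to $1$. No gaps worth flagging beyond the routine bookkeeping you already identify.
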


From Proposition \ref{p1} it follows that for any $h=\{h_x\in R^{[0,1]},\
\ x\in V\}$ satisfying (\ref{e5}) there exists a unique Gibbs measure
$\mu$ and vice versa. However, the analysis of solutions to (\ref{e5}) is
not easy. This difficulty depends on the given function $\xi$. In
the next sections we will give a condition on such
function under which the corresponding integral
equation has unique solution.

\section{ Uniqueness of translational - invariant solution of (\ref{e5})}

In this section we consider $\xi_{tu}$ as a continuous function
and we are going to fund a condition on $\xi_{tu}$ under which the equation (\ref{e5})
has unique solution in the class of
translational-invariant functions $f(t,x)$, i.e $f(t,x)=f(t),$ for
any $x\in V$. For such functions equation (\ref{e5}) can be written as
\begin{equation}\label{e10}
f(t)=\left({\int_0^1K(t,u)f(u)du\over \int_0^1 K(0,u)f(u)du}\right)^k,
\end{equation}
where $K(t,u)=\exp(J\beta \xi_{tu})>0, f(t)>0, t,u\in [0,1].$

We put
$$C^+[0,1]=\{f\in C[0,1]: f(x)\geq 0\}.$$
We are interested to positive continuous solutions to (\ref{e10}), i.e. such that

$f\in C_0^+[0,1]=\{f\in C[0,1]: f(x)\geq 0\}\setminus \{\theta\equiv 0\}$.

Note that equation (\ref{e10}) is not linear for any $k\geq 1$.

Define the linear operator $W:C[0,1]\to C[0,1]$ by
\begin{equation}\label{e11}
(Wf)(t)=\int^1_0K(t,u)f(u)du
\end{equation} and
defined the linear functional $\omega:C[0,1]\to R$ by
\begin{equation}\label{w}
\omega(f)\equiv (Wf)(0)=\int^1_0K(0,u)f(u)du.
\end{equation}

Then equation (\ref{e10}) can be written as
\begin{equation}\label{e12}
f(t)=(A_kf)(t)=((Bf)(t))^k,
\end{equation}
where
\begin{equation}\label{B}
(Bf)(t)={(Wf)(t)\over (Wf)(0)}, \ f\in C_0^+[0,1], \ k\geq 1.
\end{equation}

\subsection{Existence of solutions to the nonlinear equation (\ref{e12})}

In \cite{re} for $k=1$ we have proved that the equation (\ref{e12}) has unique solution
for arbitrary $K(\cdot,\cdot)\in C^+[0,1]^2$ and $f(\cdot)\in C^+[0,1]$.
But for $k\geq 2$ the uniqueness is not proved yet.
Denote
$$\mathcal F_k=\left\{f\in C^+[0,1]: f(t)\geq \left(m\over M_0\right)^k\right\}, k\in \mathbb{N},$$
where
$$m=\min_{t,u\in [0,1]}K(t,u), \ \ M_0=\max_{u\in [0,1]}K(0,u).$$

It is easy to see that $\mathcal F_k$ is a closed and convex subset of $C[0,1]$. Moreover this set is
invariant with respect to operator $A_k$, i.e. $A_k(\mathcal F_k)\subset \mathcal F_k$.

\begin{pro}\label{pr1} The operator $A_k$ is continuous on $\mathcal F_k$ for any $k\geq 2$.
\end{pro}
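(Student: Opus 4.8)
The plan is to factor $A_k$ as the composition $A_kf=(Bf)^k=P_k(Bf)$, where $P_k\colon C[0,1]\to C[0,1]$ denotes the pointwise $k$-th power $g\mapsto g^k$, and then to prove separately that $B$ is continuous on $\mathcal F_k$ and that $P_k$ is continuous; since a composition of continuous maps is continuous, this yields the claim. Throughout, continuity is taken with respect to the sup-norm $\|f\|=\max_{t\in[0,1]}|f(t)|$. The decisive structural fact is that on $\mathcal F_k$ the denominator $\omega(f)=(Wf)(0)$ appearing in $B$ is bounded away from $0$.

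First I would record the elementary bounds. Writing $M=\max_{t,u\in[0,1]}K(t,u)$ (finite and positive, since $K$ is continuous and strictly positive on the compact square), the operator $W$ in (\ref{e11}) satisfies $\|Wf\|\le M\|f\|$ and the functional $\omega$ in (\ref{w}) satisfies $|\omega(f)|\le M_0\|f\|$, so both are bounded linear, hence continuous. The key lower bound is that for $f\in\mathcal F_k$ one has $f(u)\ge (m/M_0)^k$ and $K(0,u)\ge m$, whence $\omega(f)=\int_0^1K(0,u)f(u)\,du\ge m\,(m/M_0)^k=:c>0$. Thus the scalar $\omega(f)$ never degenerates on $\mathcal F_k$.

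Next I would establish continuity of $B$ by the standard difference-of-quotients estimate. For $f,g\in\mathcal F_k$,
\begin{equation*}
Bf-Bg=\frac{\omega(g)\,(Wf-Wg)+Wg\,(\omega(g)-\omega(f))}{\omega(f)\,\omega(g)},
\end{equation*}
and using $|\omega(f)\omega(g)|\ge c^2$ together with the bounds above this gives $\|Bf-Bg\|\le \tfrac{2MM_0R_0}{c^2}\|f-g\|$ whenever $\|f\|,\|g\|\le R_0$. Here one must localize: $\mathcal F_k$ is not bounded above, so to prove continuity at a given $f_0$ I would restrict to the neighbourhood $\{f:\|f-f_0\|\le 1\}$, on which $\|f\|\le R_0:=\|f_0\|+1$, obtaining a (locally) Lipschitz, hence continuous, map $B$.

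Finally, for $P_k$ I would use the factorization $a^k-b^k=(a-b)\sum_{i=0}^{k-1}a^i b^{k-1-i}$, which yields $\|g^k-h^k\|\le k R^{k-1}\|g-h\|$ on the ball $\|g\|,\|h\|\le R$; thus $P_k$ is locally Lipschitz and continuous. On the neighbourhood above, $\|Bf\|\le M\|f\|/c\le MR_0/c$, so $B$ maps it into a fixed ball on which $P_k$ obeys such a bound, and composing the two local Lipschitz estimates shows $A_k$ is continuous at $f_0$. The main obstacle, and the only nontrivial point, is precisely this unboundedness of $\mathcal F_k$: neither $B$ nor $P_k$ is globally Lipschitz, so one cannot argue uniformly and must exploit that continuity is a local property, using the positive lower bound $c$ on the denominator to keep $B$ well controlled on each bounded neighbourhood.
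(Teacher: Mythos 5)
Your proof is correct and follows essentially the same route as the paper: both reduce the problem to the continuity of $B=A_1$ and then handle the $k$-th power via the telescoping identity $a^k-b^k=(a-b)\sum_{i=0}^{k-1}a^ib^{k-1-i}$, using the lower bound on the denominator $\omega(f)$ over $\mathcal F_k$ to control everything. The only difference is that the paper cites the continuity of $A_1$ from Lemma~2 of \cite{re}, whereas you prove it directly by the difference-of-quotients estimate, and you are somewhat more careful than the paper about localizing to a bounded neighbourhood of $f_0$ since $\mathcal F_k$ is unbounded above.
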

\begin{proof} For arbitrary $C>0$ we denote
$$\mathcal F_0=\left\{f\in C^+[0,1]: f(t)\geq C, \, \forall t\in [0,1] \right\}.$$
It is obvious that the operator
$A_1$ is continuous on the set $\mathcal F_0$ (see Lemma 2 in \cite{re}).

Let $f\in \mathcal F_k$ be an arbitrary element and $\{f_n\}\subset \mathcal F_k$ such that $\lim_{n\to\infty}f_n=f$.
Since the operator $A_1$ is continuous we have $\lim_{n\to\infty}A_1f_n=A_1f$. Consequently,
there exists $C_1>0$ such that $\|A_1f_n\|\leq C_1$ for $n\in \mathbb{N}$.
Moreover we have
$$(A_1f)(t)\leq C_2={M\over m_0}, \, t\in [0,1],$$
where
$$M=\max_{t,u\in [0,1]}K(t,u), \, m_0=\min_{u\in[0,1]}K(0,u).$$
We have
\begin{equation}\label{ee2}
A_kf_n-A_kf=(Bf_n)^k-(Bf)^k=q_{k,n}(t)(A_1f_n-A_1f),
\end{equation}
where
$$q_{k,n}(t)=\sum_{j=0}^{k-1}(A_1f_n)^{k-j-1}(t)(A_1f)^j(t)>0, \, t\in [0,1].$$
Consequently,
$$q_{k,n}(t)\leq C=\sum_{j=0}^{k-1}(C_1)^{k-j-1}(C_2)^j, \, t\in [0,1].$$
Hence
$$\|A_kf_n-A_kf\|\leq C\|A_1f_n-A_1f\|,\, n\in\mathbb{N}.$$
Since $A_1$ is a continuous from the last inequality it follows that $A_k$ is continuous on $\mathcal F_k$.
\end{proof}
Denote
$$\mathcal F_k^0=\left\{f\in C^+[0,1]: \left({m\over M_0}\right)^k\leq f(t)\leq \left({M\over m_0}\right)^k\right\}.$$

\begin{pro}\label{pr2} Let $k\geq 2$. If $f\in C_0^+[0,1]$ is a solution of the equation $A_kf=f$,
then $f\in \mathcal F_k^0$.
\end{pro}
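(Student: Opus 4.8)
The plan is to exploit the fixed-point relation $f = A_k f = ((Bf)(t))^k$ directly, so that bounding $f$ reduces to bounding the single factor $(Bf)(t) = (Wf)(t)/(Wf)(0)$. Since $x \mapsto x^k$ is monotone increasing on the positive reals, a two-sided estimate on $(Bf)(t)$ will immediately produce the desired two-sided estimate on $f(t)$, which is exactly membership in $\mathcal{F}_k^0$.

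First I would bound the numerator and denominator of the quotient $(Bf)(t)$ separately, using the pointwise bounds on the kernel together with $f \geq 0$. For the numerator $(Wf)(t) = \int_0^1 K(t,u) f(u)\,du$, the global bounds $m \leq K(t,u) \leq M$ give $m \int_0^1 f(u)\,du \leq (Wf)(t) \leq M \int_0^1 f(u)\,du$. For the denominator $(Wf)(0) = \int_0^1 K(0,u) f(u)\,du$, the sharper bounds $m_0 \leq K(0,u) \leq M_0$ give $m_0 \int_0^1 f(u)\,du \leq (Wf)(0) \leq M_0 \int_0^1 f(u)\,du$.

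The only point that needs care is that forming the quotient is legitimate and that the common factor $\int_0^1 f(u)\,du$ cancels, and this is precisely where the hypothesis $f \in C_0^+[0,1]$ enters. Because $f$ is continuous, nonnegative, and $f \not\equiv \theta$, one has $\int_0^1 f(u)\,du > 0$; hence $(Wf)(0) > 0$, the quotient is well defined, and the integral factor may be divided out. Combining the two estimates then yields $m/M_0 \leq (Bf)(t) \leq M/m_0$ for every $t \in [0,1]$.

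Finally, raising through the $k$-th power gives $(m/M_0)^k \leq ((Bf)(t))^k = (A_k f)(t) \leq (M/m_0)^k$, and since $A_k f = f$ by hypothesis this is exactly the assertion $f \in \mathcal{F}_k^0$. I do not expect any serious obstacle: the argument is essentially a one-line application of the extremal values of the kernel $K$, and the only genuine subtlety is the strict positivity of $\int_0^1 f(u)\,du$, which is guaranteed by excluding the identically zero function from $C_0^+[0,1]$.
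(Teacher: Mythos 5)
Your argument is correct and is precisely the ``straightforward'' computation the paper leaves to the reader: bounding $(Wf)(t)$ by $m\int_0^1 f$ and $M\int_0^1 f$, bounding $(Wf)(0)$ by $m_0\int_0^1 f$ and $M_0\int_0^1 f$, cancelling the strictly positive factor $\int_0^1 f(u)\,du$ (guaranteed by $f\in C_0^+[0,1]$), and raising to the $k$-th power. Nothing is missing.
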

\begin{proof} Straightforward.\end{proof}

\begin{pro}\label{pr3} Let $k\geq 2$. The set $A_k(\mathcal F_k^0)$ is relatively compact in $C[0,1]$.
\end{pro}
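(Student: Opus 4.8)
The plan is to verify the hypotheses of the Arzel\`a--Ascoli theorem for the family $A_k(\mathcal F_k^0)\subset C[0,1]$, namely uniform boundedness and equicontinuity; relative compactness then follows at once. Throughout I use that $K(t,u)=\exp(J\beta\xi_{tu})$ is continuous and strictly positive on the compact square $[0,1]^2$, hence bounded with $0<m\le K\le M$, and uniformly continuous.

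Uniform boundedness is immediate from the normalization. For $f\in\mathcal F_k^0$ the common factor $\int_0^1 f(u)\,du>0$ shows that
\begin{equation*}
\frac{m}{M_0}=\frac{m\int_0^1 f(u)\,du}{M_0\int_0^1 f(u)\,du}\le (Bf)(t)=\frac{\int_0^1 K(t,u)f(u)\,du}{\int_0^1 K(0,u)f(u)\,du}\le \frac{M\int_0^1 f(u)\,du}{m_0\int_0^1 f(u)\,du}=\frac{M}{m_0},
\end{equation*}
so that $(m/M_0)^k\le (A_kf)(t)\le (M/m_0)^k$ for every $t\in[0,1]$, uniformly in $f$.

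The substantive step is equicontinuity, and the key observation is again that the factor $\int_0^1 f(u)\,du$ cancels in the ratio defining $Bf$, which makes all estimates uniform in $f$. Given $\varepsilon>0$, uniform continuity of $K$ provides $\delta>0$ so that $|t-t'|<\delta$ implies $|K(t,u)-K(t',u)|<\varepsilon$ for all $u\in[0,1]$. Then
\begin{equation*}
|(Wf)(t)-(Wf)(t')|\le \int_0^1|K(t,u)-K(t',u)|\,f(u)\,du\le \varepsilon\int_0^1 f(u)\,du,
\end{equation*}
while $(Wf)(0)\ge m_0\int_0^1 f(u)\,du$, so dividing gives
\begin{equation*}
|(Bf)(t)-(Bf)(t')|=\frac{|(Wf)(t)-(Wf)(t')|}{(Wf)(0)}\le \frac{\varepsilon}{m_0},
\end{equation*}
a bound free of $f$. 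Hence $\{Bf:f\in\mathcal F_k^0\}$ is equicontinuous.

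Finally I would transfer equicontinuity from $Bf$ to $A_kf=(Bf)^k$ by the same elementary factorization used in the proof of Proposition \ref{pr1}: writing
\begin{equation*}
(A_kf)(t)-(A_kf)(t')=\big((Bf)(t)-(Bf)(t')\big)\sum_{j=0}^{k-1}(Bf)^{k-1-j}(t)\,(Bf)^{j}(t'),
\end{equation*}
and bounding each factor $Bf$ by $M/m_0$ from the uniform estimate above, the sum is at most $k(M/m_0)^{k-1}$, whence
\begin{equation*}
|(A_kf)(t)-(A_kf)(t')|\le k\left(\frac{M}{m_0}\right)^{k-1}|(Bf)(t)-(Bf)(t')|\le \frac{k}{m_0}\left(\frac{M}{m_0}\right)^{k-1}\varepsilon,
\end{equation*}
again uniformly in $f$. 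Thus $A_k(\mathcal F_k^0)$ is bounded and equicontinuous, and Arzel\`a--Ascoli yields relative compactness. The only point requiring care is that the modulus of continuity be independent of $f$; this is exactly what the cancellation of $\int_0^1 f(u)\,du$ in the normalization guarantees, and it is the step I expect to be the crux of the argument.
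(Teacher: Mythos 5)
Your proof is correct and follows the same overall strategy as the paper: Arzel\`a--Ascoli, a uniform bound $(m/M_0)^k\le A_kf\le (M/m_0)^k$, and the factorization $a^k-b^k=(a-b)\sum_{j=0}^{k-1}a^{k-1-j}b^{j}$ to pass from $Bf$ to $(Bf)^k$. The one place you genuinely diverge is the equicontinuity estimate for $Bf$ itself. The paper bounds the numerator by $\int_0^1|K(t,u)-K(t',u)|f(u)\,du\le (M/m_0)^k\int_0^1|K(t,u)-K(t',u)|\,du$ using the upper bound defining $\mathcal F_k^0$, and separately bounds the denominator by $\omega(f)\ge m_0\,(m/M_0)^k$ using the lower bound defining $\mathcal F_k^0$; this produces the constant $\frac{k}{m_0}\left(\frac{M_0}{m}\right)^k\left(\frac{M}{m_0}\right)^{2k-1}$. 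You instead keep $\int_0^1 f(u)\,du$ in both numerator and denominator and let it cancel, getting the $f$-independent bound $\varepsilon/m_0$. Your route is cleaner, gives a much smaller constant $\frac{k}{m_0}\left(\frac{M}{m_0}\right)^{k-1}$, and --- since it never uses the two-sided bounds defining $\mathcal F_k^0$ beyond $f>0$ --- actually proves the stronger statement that $A_k(C_0^+[0,1])$ is relatively compact, whereas the paper's constants are tied to the set $\mathcal F_k^0$ supplied by Proposition \ref{pr2}. Both arguments are valid; the paper's is just a blunter application of the same idea.
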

\begin{proof}

 By Arzel\'a-Askoli's theorem (see \cite{15}, ch.III,\S 3)
it suffices to prove that the set of functions $A_k(\mathcal F^0_k)$ is equi-continuous and there exists $\gamma>0$ such that
$$h(t)\leq \gamma, \ \ \forall t\in [0,1]\ \ \mbox{and} \ \ \forall h\in A_k(\mathcal F_k^0).$$

Let $h\in A_k(\mathcal F^0_k)$ be an arbitrary function, we have
$$0<h(t)\leq \left(M\over m_0\right)^k$$
and there exists a function $f\in \mathcal F^0_k$ such that $h=A_kf$.

Now we shall prove that $A_k(\mathcal F^0_k)$ is equi-continuous. For arbitrary $t, t'\in [0,1]$ we have ($h=A_kf$)
$$|h(t)-h(t')|=|(A_1f)^k(t)-(A_1f)^k(t')|=$$ $$ \sum_{j=0}^{k-1}(A_1f)^{k-j-1}(t)(A_1f)^j(t')|(A_1f)(t)-(A_1f)(t')|\leq$$ $$ k \left(M\over m_0\right)^{k-1}{1\over
\omega(f)}\int^1_0|K(t,u)-K(t',u)|f(u)du\leq$$ $$
k \left(M\over m_0\right)^{2k-1}{1\over
\omega(f)}\int^1_0|K(t,u)-K(t',u)|du,
$$ where $\omega(f)$ is defined in (\ref{w}).

We have
$$\omega(f)\geq m_0\cdot\left(m\over M_0\right)^k, \, f\in\mathcal F_k^0.$$

Consequently,
$$|h(t)-h(t')|\leq
{k\over m_0}\left(M_0\over m\right)^k \left(M\over m_0\right)^{2k-1}\int^1_0|K(t,u)-K(t',u)|du.
$$

Since the
kernel $K(t,u)$ is  uniformly continuous on $[0,1]^2$, we conclude
that $A_k(\mathcal F^0_k)$ also is equi-continuous.
\end{proof}

By Propositions \ref{pr1}-\ref{pr3} and Schauder's theorem (see \cite{9}, p.20) one gets
the following

\begin{thm}\label{t1} The equation $A_kf=f$ has at least one
solution in $C_0^+[0,1]$ and the set of all solutions of the equation is a subset in $\mathcal F_k^0$.
\end{thm}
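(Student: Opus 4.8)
The plan is to obtain existence via Schauder's fixed point theorem applied on the set $\mathcal F_k^0$, and then to read off the localization of \emph{all} solutions directly from Proposition \ref{pr2}.

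First I would record that $\mathcal F_k^0$ is a nonempty, convex, closed and bounded subset of $C[0,1]$: it contains the constant functions with value in $[(m/M_0)^k,(M/m_0)^k]$, the two-sided pointwise bounds $(m/M_0)^k\le f(t)\le (M/m_0)^k$ are preserved under convex combinations and under uniform limits, and they bound the norm by $(M/m_0)^k$. Next I would verify the self-map property $A_k(\mathcal F_k^0)\subset \mathcal F_k^0$, which is exactly the estimate underlying Proposition \ref{pr2}: for any positive $f$ one has $m\int_0^1 f(u)\,du\le (Wf)(t)\le M\int_0^1 f(u)\,du$ and $m_0\int_0^1 f(u)\,du\le (Wf)(0)\le M_0\int_0^1 f(u)\,du$, whence $m/M_0\le (Bf)(t)\le M/m_0$ and therefore $(A_kf)(t)=((Bf)(t))^k\in[(m/M_0)^k,(M/m_0)^k]$.

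With invariance established, the remaining hypotheses of Schauder's theorem are already supplied. Since $\mathcal F_k^0\subset \mathcal F_k$, Proposition \ref{pr1} yields continuity of $A_k$ on $\mathcal F_k^0$, and Proposition \ref{pr3} gives that $A_k(\mathcal F_k^0)$ is relatively compact in $C[0,1]$. Applying the Schauder fixed point theorem (a continuous self-map of a nonempty convex closed subset of a Banach space whose image is relatively compact has a fixed point) to $A_k\colon \mathcal F_k^0\to \mathcal F_k^0$ produces a fixed point $f\in \mathcal F_k^0$. Since $f\ge (m/M_0)^k>0$, in particular $f\in C_0^+[0,1]$, so $A_kf=f$ has at least one solution.

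For the second assertion I would simply invoke Proposition \ref{pr2}: every solution of $A_kf=f$ lying in $C_0^+[0,1]$ automatically belongs to $\mathcal F_k^0$, so the entire solution set is contained in $\mathcal F_k^0$. I do not expect a genuine obstacle, as Propositions \ref{pr1}--\ref{pr3} were arranged precisely to furnish the three hypotheses of Schauder's theorem; the only points demanding care are the verification of $A_k(\mathcal F_k^0)\subset\mathcal F_k^0$ (so that the fixed point lands in the intended set) and the bookkeeping that continuity and the bounds transfer from the larger set $\mathcal F_k$ to $\mathcal F_k^0$.
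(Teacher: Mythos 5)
Your proof is correct and follows essentially the same route as the paper, which derives the theorem directly from Propositions \ref{pr1}--\ref{pr3} together with Schauder's fixed point theorem and localizes all solutions via Proposition \ref{pr2}. The only difference is that you explicitly verify the self-map property $A_k(\mathcal F_k^0)\subset\mathcal F_k^0$ (the paper only states invariance for $\mathcal F_k$), which is a detail worth making explicit but not a departure in method.
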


\subsection{The Hammerstein's nonlinear equation}

For every $k\in \mathbb N$ we consider an integral operator $H_k$ acting in
$C^+[0,1]$ as follows:

$$(H_kf)(t)=\int_0^1K(t,u)f^k(u)du.$$
If $k\geq 2$ then the operator $H_k$ is a nonlinear operator
which is called Hammerstein's operator of order $k$.
Moreover the linear operator equation $H_1f=f$ has a unique positive solution $f$
in $C[0,1]$ (see \cite{k}, p.80).

For a nonlinear homogeneous operator $A$ it is known that if there is one positive eigenfunction of
the operator $A$ then the number of the positive eigenfunctions is continuum (see \cite{k}, p.186).

Denote
$$\mathcal M_0=\left\{f\in C^+[0,1]: f(0)=1\right\}.$$

\begin{lemma}\label{l1}
The equation
\begin{equation}\label{ee3}
A_kf=f, \ k\geq 2
\end{equation}
 has a strongly positive solution iff the equation
\begin{equation}\label{ee4}
H_kf=\lambda f, \ k\geq 2
\end{equation}
has a strongly positive solution in $\mathcal M_0$.
\end{lemma}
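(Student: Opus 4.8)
The plan is to set up an explicit correspondence between the two solution sets by means of the normalization operator $B$ and the $k$-th power map. The two structural identities I would use are $H_kf=W(f^k)$ and $A_kf=(Bf)^k$, together with the fact that $B$ rescales $W$ so that its value at $0$ becomes $1$. Concretely, to a strongly positive fixed point $f$ of $A_k$ I would associate $g=Bf$, and to a strongly positive eigenfunction $g\in\M_0$ of $H_k$ I would associate $f=g^k$; I will then check that these two assignments are mutually inverse and preserve strong positivity, the eigenvalue being $\lambda=\omega(f)=(Wf)(0)$.

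To carry out the forward direction, suppose $A_kf=f$ with $f$ strongly positive. Put $g=Bf=Wf/(Wf)(0)$. Then $g(0)=1$, so $g\in\M_0$, and $g$ is strongly positive because $K>0$ forces $(Wf)(t)>0$ for every $t$. By definition $g^k=(Bf)^k=A_kf=f$, hence $W(g^k)=Wf=(Wf)(0)\,g$; writing $\lambda=(Wf)(0)>0$, this reads $H_kg=\lambda g$, so $g$ is a strongly positive eigenfunction of $H_k$ in $\M_0$.

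For the reverse direction, suppose $H_kg=\lambda g$ with $g\in\M_0$ strongly positive. Evaluating $W(g^k)=\lambda g$ at $t=0$ and using $g(0)=1$ gives $\lambda=(W(g^k))(0)=\int_0^1K(0,u)g^k(u)\,du>0$. Put $f=g^k$, which is strongly positive. Then $Wf=W(g^k)=\lambda g$ and $(Wf)(0)=\lambda$, so $Bf=Wf/(Wf)(0)=g$ and therefore $A_kf=(Bf)^k=g^k=f$. Thus $f$ is a strongly positive solution of $A_kf=f$, which establishes the equivalence.

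I do not expect a serious obstacle here: once one notices that the correct eigenvalue is $\lambda=(Wf)(0)$, the whole argument reduces to direct substitution, and the two constructions are visibly inverse to one another ($B$ followed by the $k$-th power returns $f$, the $k$-th power followed by $B$ returns $g$). The only points that require a word of care are that strong positivity is transported by $W$, which uses $K>0$ and its continuity on the compact square $[0,1]^2$, and that $\lambda>0$; both are immediate consequences of $K>0$.
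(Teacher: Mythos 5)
Your proof is correct and takes essentially the same approach as the paper: since $A_kf=f$ forces $Bf=\sqrt[k]{f}$, your assignment $f\mapsto Bf$ coincides with the paper's $f\mapsto h=\sqrt[k]{f}$, and your reverse construction $g\mapsto g^k$ with eigenvalue $\lambda=\omega(f)=(Wf)(0)$ is exactly the paper's sufficiency argument.
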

\begin{proof} {\sl Necessariness.} Let $f_0\in C_0^+[0,1]$ be a solution of the equation (\ref{ee3}).
We have
$$(Wf_0)(t)=\omega(f_0)\sqrt[k]{f_0(t)}.$$
From this equality we get
$$(H_kh)(t)=\lambda_0h(t),$$
where $h(t)=\sqrt[k]{f_0(t)}$ and $\lambda_0=\omega(f_0)>0$.

It is easy to see that $h\in \mathcal M_0$ and $h(t)$ is an eigenfunction of the Hammerstein's operator $H_k$,
corresponding the positive eigenvalue $\lambda_0$.

{\sl Sufficiency.}  Let $k\geq 2$ and $h\in \mathcal M_0$ be an eigenfunction of the Hammerstein's operator. Then there is a
number $\lambda_0>0$ such that $H_kh=\lambda_0h$. From $h(0)=1$ we get $\lambda_0=(H_kh)(0)=\omega(h^k)$. Then

$$h(t)={H_kh\over \omega(h^k)}.$$
From this equality we get $A_kf_0=f_0$ with $f_0=h^k\in C_0^+[0,1]$.
This completes the proof. \end{proof}

\begin{thm}\label{t2} If $k\geq 2$ then every number $\lambda>0$ is an eigenvalue of the Hammerstein's operator $H_k$.
\end{thm}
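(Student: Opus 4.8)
The plan is to reduce the statement to a single observation: the Hammerstein operator $H_k$ is positively homogeneous of degree $k$, so once we know it has one strongly positive eigenfunction, rescaling that eigenfunction sweeps out \emph{every} positive eigenvalue. This is exactly the phenomenon anticipated by the remark preceding the theorem (that a homogeneous operator with one positive eigenfunction has a continuum of them), made quantitative.

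First I would secure the existence of a single eigenfunction. By Theorem~\ref{t1} the equation $A_kf=f$ has at least one solution $f_0\in C_0^+[0,1]$, and by Lemma~\ref{l1} this yields a strongly positive eigenfunction $h_0\in\mathcal M_0$ of $H_k$, namely $h_0=\sqrt[k]{f_0}$, with positive eigenvalue $\lambda_0=\omega(h_0^k)>0$. Next I would record the homogeneity: for any constant $c>0$,
$$H_k(ch_0)(t)=\int_0^1K(t,u)\,(ch_0(u))^k\,du=c^k(H_kh_0)(t)=c^k\lambda_0h_0(t)=c^{k-1}\lambda_0\,(ch_0)(t),$$
so that $ch_0$ is again a strongly positive eigenfunction of $H_k$, now with eigenvalue $c^{k-1}\lambda_0$.

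It then remains to solve, for a prescribed $\lambda>0$, the scalar equation $c^{k-1}\lambda_0=\lambda$. Since $k\geq 2$ we have $k-1\geq 1$, so the map $c\mapsto c^{k-1}$ is a continuous strictly increasing bijection of $(0,\infty)$ onto itself; hence $c=(\lambda/\lambda_0)^{1/(k-1)}>0$ is the required positive scalar. With this choice, $ch_0$ is a strongly positive eigenfunction of $H_k$ corresponding to the eigenvalue $\lambda$, which proves that every $\lambda>0$ is an eigenvalue.

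I do not expect a serious obstacle here; the substantive work has already been done in establishing existence of $h_0$ via Schauder's theorem (Theorem~\ref{t1}) and the correspondence in Lemma~\ref{l1}. The only point that genuinely requires care is the role of the hypothesis $k\geq 2$: it is precisely the condition $k-1\geq 1$ that makes $c\mapsto c^{k-1}$ surjective onto $(0,\infty)$, so that scaling can realize \emph{all} positive eigenvalues. (For $k=1$ the operator is linear and homogeneous of degree one, the scaling leaves the eigenvalue fixed at $\lambda_0$, and the conclusion fails—consistent with the uniqueness already known in that case.)
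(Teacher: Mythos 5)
Your proposal is correct and follows essentially the same route as the paper: obtain one strongly positive eigenfunction with eigenvalue $\lambda_0>0$ via Theorem~\ref{t1} and Lemma~\ref{l1}, then use the degree-$k$ homogeneity of $H_k$ to rescale by $c=(\lambda/\lambda_0)^{1/(k-1)}$ and hit any prescribed $\lambda>0$. The paper writes the rescaled function directly as $\sqrt[k-1]{\lambda/\lambda_0}\,f_0$, which is exactly your construction.
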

\begin{proof} By Theorem \ref{t1} and Lemma \ref{l1} there exist $\lambda_0>0$ and $f_0\in \mathcal M_0$ such that
$$H_kf_0=\lambda_0f_0.$$
Take $\lambda\in (0,+\infty)$, $\lambda\ne\lambda_0$. Define function $h_0(t)\in C_0^+[0,1]$ by
$$h_0(t)=\sqrt[k-1]{\lambda\over \lambda_0}f_0(t), \ \ t\in [0,1].$$ Then
$$H_kh_0=H_k\left(\sqrt[k-1]{\lambda\over \lambda_0}f_0\right)=\lambda h_0.$$
This completes the proof. \end{proof}

Denote
$$\mathcal K=\left\{f\in C^+[0,1]: M \cdot\min_{t\in [0,1]}f(t)\geq m\cdot\max_{t\in [0,1]}f(t)\right\},$$
$$\mathcal P_k=\left\{f\in C[0,1]: {m\over M} \cdot \left(1\over M\right)^{1\over k-1}\leq f(t)\leq {M\over m}\cdot\left(1\over m\right)^{1\over k-1}\right\}, \, k\geq 2. $$

\begin{pro}\label{pr4} Let $k\geq 2$.

a) The following holds

$$H_k(C^+[0,1])\subset \mathcal K.$$

b) If a function $f_0\in C_0^+[0,1]$ is a solution of the equation
\begin{equation}\label{ee5} H_kf=f
\end{equation}
then $f_0\in \mathcal P_k$.
\end{pro}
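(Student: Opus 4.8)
The plan is to prove the two parts separately, with part (b) resting on part (a); both arguments use only the uniform bound $m\le K(t,u)\le M$ on $[0,1]^2$. For part (a), take $f\in C^+[0,1]$ and put $g=H_kf$, so $g(t)=\int_0^1K(t,u)f^k(u)\,du$. Writing $I=\int_0^1 f^k(u)\,du\ge 0$ and using $m\le K\le M$, I would bound $mI\le g(t)\le MI$ for all $t$, whence $\min_t g(t)\ge mI$ and $\max_t g(t)\le MI$. Therefore $M\cdot\min_t g(t)\ge mMI\ge m\cdot\max_t g(t)$, which is exactly the defining inequality of $\mathcal K$, so $g\in\mathcal K$.

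For part (b), let $f_0\in C_0^+[0,1]$ solve $H_kf_0=f_0$ and set $\alpha=\min_t f_0(t)$, $\beta=\max_t f_0(t)$, with $0<\alpha\le\beta$. The first step is to apply the fixed-point equation $f_0(t)=\int_0^1K(t,u)f_0^k(u)\,du$ at the extremal values together with the elementary bounds $\alpha^k\le\int_0^1 f_0^k\le\beta^k$: this gives $\alpha\ge m\int_0^1 f_0^k\ge m\alpha^k$ and $\beta\le M\int_0^1 f_0^k\le M\beta^k$, hence $\alpha\le(1/m)^{1/(k-1)}$ and $\beta\ge(1/M)^{1/(k-1)}$ after dividing by $\alpha$ and $\beta$ respectively. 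The second step is part (a): since $f_0=H_kf_0$ we have $f_0\in\mathcal K$, i.e. $M\alpha\ge m\beta$, equivalently $\alpha\ge\tfrac{m}{M}\beta$ and $\beta\le\tfrac{M}{m}\alpha$.

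Combining these two ingredients finishes the proof. For every $t$, the upper estimate reads $f_0(t)\le\beta\le\tfrac{M}{m}\alpha\le\tfrac{M}{m}(1/m)^{1/(k-1)}$, and the lower estimate reads $f_0(t)\ge\alpha\ge\tfrac{m}{M}\beta\ge\tfrac{m}{M}(1/M)^{1/(k-1)}$; these are precisely the two inequalities defining $\mathcal P_k$, so $f_0\in\mathcal P_k$. I do not expect any real obstacle, as everything reduces to the uniform kernel bounds. The one point demanding care is the bookkeeping of the inequality directions, namely using $\int f_0^k\ge\alpha^k$ to control $\alpha$ and $\int f_0^k\le\beta^k$ to control $\beta$, and then feeding in the cone inequality $M\alpha\ge m\beta$ from $\mathcal K$ so that $\min f_0$ is bounded below through $\beta$ and $\max f_0$ bounded above through $\alpha$.
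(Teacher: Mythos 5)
Your proof is correct and follows essentially the same route as the paper's: part (a) comes from the uniform kernel bounds $m\le K\le M$ applied to $\int_0^1 f^k$, and part (b) combines $\beta\le M\beta^k$ and $\alpha\ge m\alpha^k$ with the cone inequality $M\alpha\ge m\beta$ supplied by part (a). The only cosmetic difference is that the paper writes the same estimates in terms of $\|f\|$ and $f_{\min}$ instead of your $\alpha$ and $\beta$.
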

\begin{proof} a) Let $h\in H_k(C^+[0,1])$ be an arbitrary function. Then there exists a function $f\in C^+[0,1]$
such that $h=H_kf$. Since $h$ is continuous on $[0,1]$, there are $t_1,t_2\in [0,1]$ such that
$$h_{\min}=\min_{t\in[0,1]}h(t)=h(t_1)=(H_kf)(t_1),$$
$$h_{\max}=\max_{t\in[0,1]}h(t)=h(t_2)=(H_kf)(t_2).$$
Hence
$$h_{\min}\geq m\int^1_0f^k(u)du\geq m\int^1_0{K(t_2,u)\over M}f^k(u)du={m\over M}h_{max},$$
i.e. $h\in \mathcal K$.

b) Let $f\in C_0^+[0,1]$ be a solution of the equation (\ref{ee5}).
Then we have
$\|f\|\leq M\|f\|^k$. Consequently,
$$\|f\|\geq \left(1\over M\right)^{1\over k-1}.$$
By the property a) we have
$$f(t)\geq f_{\min}=\min_{t\in [0,1]}f(t)\geq {m\over M}\|f\|.$$
Then we obtain
$$f(t)\geq {m\over M}\left({1\over M}\right)^{1\over k-1}.$$
Also we have
$$f(t)=(H_kf)(t)\geq m\int^1_0f^k(u)du\geq m f_{\min}^k.$$
Then $f_{\min}\geq m f_{\min}^k$, i.e.
$$f_{\min}\leq \left({1\over m}\right)^{1\over k-1}.$$
Hence be the property a) we get
$$f(t)\leq f_{\max}\leq {M\over m}f_{\min}\leq {M\over m}\left({1\over m}\right)^{1\over k-1}.$$ Thus we have
$f\in \mathcal P_k$.
\end{proof}
\subsection{The uniqueness of fixed point of the operators $A_k$ and $H_k$}

Now we shall prove that $A_kf=f$ and $H_kf=f$ have a unique solution in
$C_0^+[0,1]$.

\begin{lemma}\label{l2}
Assume function $f\in C[0,1]$ changes its sign on $[0,1]$. Then for every $a\in \R$ the following inequality holds
$$\|f_a\|\geq {1\over n+1}\|f\|, \ \ n\in \mathbb{N},$$
where $f_a=f_a(t)=f(t)-a, \, t\in [0,1].$
\end{lemma}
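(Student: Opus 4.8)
The plan is to reduce the statement to a one-line estimate coming from the triangle inequality in $\R$, after isolating the two relevant extremal values of $f$. Since $f\in C[0,1]$ is continuous on a compact interval it attains its maximum and minimum, so I would set $p=\max_{t\in[0,1]}f(t)$ and $q=-\min_{t\in[0,1]}f(t)$. The hypothesis that $f$ changes sign means precisely that $f$ assumes both strictly positive and strictly negative values, so that $p>0$ and $q>0$, and consequently $\|f\|=\max\{p,q\}$.

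First I would compute the sup-norm of the shifted function explicitly. For fixed $a\in\R$ the function $f_a=f-a$ has maximum $p-a$ and minimum $-q-a$, whence
$$\|f_a\|=\max\{\,|p-a|,\,|q+a|\,\}.$$
The key step is then to bound this below independently of $a$. Applying the triangle inequality to the two arguments gives
$$|p-a|+|q+a|\ \geq\ |(p-a)+(q+a)|=p+q,$$
so that the larger of the two terms is at least $\tfrac12(p+q)$; that is,
$$\|f_a\|\ \geq\ \frac{p+q}{2}\ \geq\ \frac{\max\{p,q\}}{2}=\frac{\|f\|}{2}.$$

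Finally I would obtain the claimed family of inequalities by monotonicity in $n$: for every $n\in\mathbb{N}$ (i.e.\ $n\geq 1$) one has $\tfrac1{n+1}\leq\tfrac12$, so $\|f_a\|\geq\tfrac12\|f\|\geq\tfrac1{n+1}\|f\|$. In this light the substantive content is the sharp case $n=1$, and the stated inequality for general $n$ is an immediate weakening of it.

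I do not anticipate a genuine obstacle here, as the argument is elementary. The only two points deserving care are, first, to record explicitly that the sign change is exactly what forces \emph{both} $p$ and $q$ to be positive (so that $p+q$ dominates $\max\{p,q\}$ by the factor $2$, rather than merely equalling it), and second, to note that the index range must start at $n=1$, since $n=0$ would yield the false bound $\|f_a\|\geq\|f\|$.
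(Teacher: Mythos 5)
Your argument is correct, and it reaches the lemma by a cleaner route than the paper. Both proofs really establish the same sharp bound $\|f_a\|\geq \tfrac12\|f\|$ and then observe $\tfrac1{n+1}\leq\tfrac12$ for $n\geq 1$; the difference is in how that bound is obtained. The paper first disposes of $a=0$, then for $a>0$ splits into three cases according to whether $|f_{\min}|\geq f_{\max}$ and whether $a\leq\tfrac12\|f\|$, verifying the estimate separately in each, and finally handles $a<0$ by replacing $f$ with $-f$. You instead write $\|f_a\|=\max\{|p-a|,|q+a|\}$ with $p=\max f>0$, $q=-\min f>0$ and apply the triangle inequality $|p-a|+|q+a|\geq p+q\geq\max\{p,q\}=\|f\|$, which yields the bound uniformly in $a$ with no case distinctions. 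Your identification of $\|f_a\|$ as the larger of the two endpoint values is justified because $f-a$ is continuous with range $[-q-a,\,p-a]$, so the supremum of its absolute value is attained at an extremum. Your approach buys brevity and makes transparent exactly where the sign-change hypothesis enters (it forces $p,q>0$ so that $p+q\geq\max\{p,q\}$); the paper's casework is more pedestrian but arrives at the same place. Your closing remarks — that the constant $\tfrac12$ is the substantive content and that the statement would fail for $n=0$ — are both accurate.
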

\begin{proof}
By conditions of lemma there are $t_1,t_2\in [0,1]$ such that
$$f_{\min}=f(t_1)<0, \ \ f_{\max}=f(t_2)>0.$$
In case $a=0$ the proof is obvious. We assume $a>0$

a) Let $|f_{\min}|\geq f_{\max}$. Then $\|f\|=|f_{\min}|=|f(t_1)|.$
Hence
$$\|f_a\|=\max\{|f(t_1)-a|,|f(t_2)-a|\}=|f(t_1)-a|>|f(t_1)|=\|f\|\geq {1\over n+1}\|f\|, \, n\in \mathbb{N}.$$

b) Let $|f_{\min}|<f_{\max}$ and ${1\over 2}\|f\|\geq a$. Then $\|f\|=f_{\max}=f(t_2)$ and $\|f\|-a\geq a>0$. Consequently, $$\|f_a\|=\max\{|f(t_1)-a|, |f(t_2)-a|\}\geq |f(t_2)-a|=\|f\|-a\geq {1\over 2}\|f\|\geq {1\over n+1}\|f\|, \, n\in \mathbb{N}.$$

c) Let $|f_{\min}|<f_{\max}$ and ${1\over 2}\|f\|< a$. Then $\|f\|=f(t_2)$ and
 $$\|f_a\|=\max\{|f(t_1)-a|, |f(t_2)-a|\}\geq |f(t_1)-a|>a>{1\over 2}\|f\|\geq {1\over n+1}\|f\|, \, n\in \mathbb{N}.$$
 Thus for $a>0$ the proof is completed. For $a<0$ we put $g_a(t)=g(t)-a'$ with $g(t)=-f(t)$ and $a'=-a>0$. Then
 $$\|f_a\|=\|g_a\|\geq{1\over n+1}\|g\|= {1\over n+1}\|f\|, \, n\in\mathbb{N}.$$
This completes the proof.\end{proof}

\begin{thm}\label{t3}
Let $k\geq 2$. If the kernel $K(t,u)$ satisfies the condition
\begin{equation}\label{I}
\left(M\over m\right)^k-\left(m\over M\right)^k<{1\over k},
\end{equation}
then the operator $H_k$ has a unique fixed point in $C_0^+[0,1]$.
\end{thm}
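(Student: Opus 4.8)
The plan is to argue by contradiction: assume $H_k$ has two fixed points $f,g\in C_0^+[0,1]$ with $f\neq g$. First I record the structural facts I will use. Since $f(t)=\int_0^1K(t,u)f^k(u)\,du\ge m\int_0^1 f^k(u)\,du>0$ for every $t$ (and likewise for $g$), both fixed points are strictly positive on $[0,1]$; by part b) of Proposition \ref{pr4} they both lie in $\mathcal{P}_k$. Write $\phi=f-g$ and $\Delta=f^k-g^k$, so that subtracting the two fixed-point equations gives $\phi(t)=\int_0^1K(t,u)\Delta(u)\,du$. Existence of a fixed point is already guaranteed (take $\lambda=1$ in Theorem \ref{t2}), so only uniqueness is at issue.

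The first step is to reduce to the case in which $\phi$ changes sign. Suppose it does not; after possibly interchanging $f$ and $g$ we may assume $\phi\ge0$ and $\phi\not\equiv0$. Then $\Delta\ge0$ and $\Delta\not\equiv0$ (because $x\mapsto x^k$ is increasing and $\phi>0$ on a nonempty open set), whence $\phi(t)=\int_0^1K(t,u)\Delta(u)\,du\ge m\int_0^1\Delta(u)\,du>0$ for all $t$; thus $f>g$ everywhere and $d:=\min_{t\in[0,1]}f(t)/g(t)>1$. On the other hand, evaluating the two fixed-point equations at a minimizer $t_m$ of $f/g$ and using $f^k(u)=(f(u)/g(u))^kg^k(u)\ge d^kg^k(u)$ gives
\[
d=\frac{\int_0^1K(t_m,u)f^k(u)\,du}{\int_0^1K(t_m,u)g^k(u)\,du}\ge d^k,
\]
so $d^{k-1}\le1$ and $d\le1$, contradicting $d>1$. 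Hence $\phi$ must change sign. This reduction is robust: it uses only positivity of $K$ (i.e.\ $m>0$) and $k\ge2$, and in particular does not use hypothesis (\ref{I}).

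The second step handles the sign-changing case, and this is where condition (\ref{I}) enters. Since $\phi$ is continuous and changes sign it has a zero $t_0$, so for every $t$,
\[
\phi(t)=\phi(t)-\phi(t_0)=\int_0^1\bigl(K(t,u)-K(t_0,u)\bigr)\Delta(u)\,du,
\]
and $|K(t,u)-K(t_0,u)|\le M-m$ yields $\|\phi\|\le(M-m)\int_0^1|\Delta(u)|\,du$. (Equivalently, one may subtract the constant $a=\tfrac{M+m}{2}\int_0^1\Delta\,du$, note $\|\phi-a\|\le\tfrac{M-m}{2}\int_0^1|\Delta|\,du$ from $|K-\tfrac{M+m}{2}|\le\tfrac{M-m}{2}$, and invoke Lemma \ref{l2} with $n=1$ to get $\|\phi\|\le 2\|\phi-a\|$; both routes give the same estimate.) It remains to estimate $\int_0^1|\Delta|\,du=\int_0^1|f^k-g^k|\,du$. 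Using $f,g\in\mathcal{P}_k$ and the mean value inequality $|f^k-g^k|\le k\,\bigl(\max(f,g)\bigr)^{k-1}|\phi|$ together with the upper bound of $\mathcal{P}_k$, one obtains $\int_0^1|\Delta|\,du\le k\bigl(\tfrac{M}{m}\bigr)^{k-1}\tfrac1m\,\|\phi\|$, and therefore
\[
\|\phi\|\le k\Bigl[\bigl(\tfrac{M}{m}\bigr)^k-\bigl(\tfrac{M}{m}\bigr)^{k-1}\Bigr]\|\phi\|\le k\Bigl[\bigl(\tfrac{M}{m}\bigr)^k-\bigl(\tfrac{m}{M}\bigr)^k\Bigr]\|\phi\|<\|\phi\|,
\]
where the middle inequality uses $(M/m)^{k-1}\ge(m/M)^k$ and the last one is exactly hypothesis (\ref{I}). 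Since $\|\phi\|>0$ this is absurd, so $f=g$.

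I expect the delicate point to be the quantitative estimate of $\int_0^1|f^k-g^k|\,du$ in the sign-changing step: it must be expressed in terms of $M,m,k$ alone and brought into a form controlled by the left-hand side of (\ref{I}). The mean value estimate above actually lands on the smaller quantity $k[(M/m)^k-(M/m)^{k-1}]$, which is dominated by $k[(M/m)^k-(m/M)^k]$; thus any estimate of the $k$-th power difference no sharper than this will still be closed off by (\ref{I}), which is why the stated hypothesis suffices. The only genuinely power-dependent obstruction is the sign-changing case, the dominating-fixed-point case being eliminated unconditionally in the first step.
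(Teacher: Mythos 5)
Your proof is correct, and it reaches the same contradiction as the paper but implements the two key steps differently. The paper's proof also assumes two fixed points $f_1,f_2$, but (i) it obtains the sign change of $f_1-f_2$ by citing Theorem 46.6 of Krasnosel'skii--Zabrejko \cite{kz}, whereas you prove it directly via the minimum of $f/g$ and the homogeneity inequality $d\ge d^k$ --- a genuinely self-contained improvement; and (ii) for the sign-changing case the paper writes $f_1-f_2=\int_0^1 K(t,u)\,k\xi^{k-1}(u)(f_1-f_2)(u)\,du$ with $\xi$ between $f_1$ and $f_2$, bounds $\gamma_1\le K(t,u)\xi^{k-1}(u)\le\gamma_2$ with $\gamma_1=(m/M)^k$, $\gamma_2=(M/m)^k$ via Proposition \ref{pr4}, centers the integrand at $\tfrac{k}{2}(\gamma_1+\gamma_2)$, and invokes Lemma \ref{l2} to force $k(\gamma_2-\gamma_1)\ge 1$; you instead subtract the value at a zero $t_0$ of $\phi$ and use $|K(t,u)-K(t_0,u)|\le M-m$, which bypasses Lemma \ref{l2} entirely (your parenthetical shows you know the two are interchangeable) and lands on the sharper constant $k\bigl[(M/m)^k-(M/m)^{k-1}\bigr]$, correctly dominated by the quantity in (\ref{I}). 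So your argument proves slightly more than the theorem asks under a formally weaker-looking bound, while the paper's version is tuned to the symmetric expression appearing in condition (\ref{I}); both are valid, and yours has the advantage of removing the external citation for the sign-change step.
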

\begin{proof}

By Theorem \ref{t2} the Hammerstein's equation $H_kf=f$ has at least one solution.
 Assume that there are two solutions
$f_1\in C_0^+[0,1]$ and $f_2\in C_0^+[0,1]$, i.e $H_kf_i=f_i$,
$i=1,2$. Denote $f(t)=f_1(t)-f_2(t)$. Then by Theorem 46.6 of \cite{kz}
the function $f(t)$ changes its sign on $[0,1]$.
From Lemma \ref{l2} we get
$$\max_{t\in[0,1]}\left|f(t)-{k\over 2}(\gamma_1+\gamma_2)\int^1_0f(s)ds\right|\geq{1\over 2}\|f\|,$$
where
$$\gamma_1=\left(m\over M\right)^k, \ \ \gamma_2=\left(M\over m\right)^k.$$
By a mean value Theorem we have
$$f(t)=\int^1_0K(t,u)k\xi^{k-1}(u)f(u)du,$$
here $\xi\in C^+[0,1]$ and
$$\min\{f_1(t), f_2(t)\}\leq \xi(t)\leq \max\{f_1(t), f_2(t)\}, \, t\in [0,1].$$

By Proposition \ref{pr4} we have $\xi\in \mathcal P_k$, i.e.
$${m\over M}\left(1\over M\right)^{1\over k-1}\leq \xi(t)\leq {M\over m}\left(1\over m\right)^{1\over k-1}, \, t\in [0,1].$$
Hence
$$\gamma_1\leq K(t,u)\xi^{k-1}(u)\leq \gamma_2,\, t,u\in[0,1].$$
Therefore
$$\left|k\cdot K(t,u)\xi^{k-1}(u)-{\gamma_1+\gamma_2\over 2}\right|\leq {\gamma_2-\gamma_1\over 2}.$$
Then
\begin{equation}\label{ee6}
\left|f(t)-{k\over 2}(\gamma_1+\gamma_2)\int^1_0f(u)du\right|\leq {k\over 2}
(\gamma_2-\gamma_1)\|f\|.
\end{equation}
Assume the kernel $K(t,u)$ satisfies the condition (\ref{I}). Then $k(\gamma_2-\gamma_1)<1$ and the inequality
(\ref{ee6}) contradicts to Lemma \ref{l2}. This completes the proof.
\end{proof}
\begin{thm}\label{t4}
Let $k\geq 2$. If the kernel $K(t,u)$ satisfies the condition
(\ref{I}), then for every $\lambda>0$ the Hammerstein's equation $H_kf=\lambda f$ has unique solution in $C_0^+[0,1]$.
\end{thm}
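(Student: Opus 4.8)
The plan is to reduce the general eigenvalue equation $H_kf=\lambda f$ to the fixed-point equation $H_kg=g$ already settled in Theorem \ref{t3}, exploiting the homogeneity of the Hammerstein operator. The key structural fact is that $H_k$ is positively homogeneous of degree $k$: for any constant $c>0$ and any $f\in C^+[0,1]$ we have $(H_k(cf))(t)=\int_0^1K(t,u)(cf(u))^k\,du=c^k(H_kf)(t)$, so $H_k(cf)=c^kH_kf$. This is precisely the scaling that was used in the proof of Theorem \ref{t2} to show that every $\lambda>0$ is an eigenvalue; here I will use it in the reverse direction, to transport a uniqueness statement.

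First I would set up the rescaling bijection. Fix $\lambda>0$ and put $c=\lambda^{1/(k-1)}>0$ (this is where $k\geq 2$ is used, so that the exponent $1/(k-1)$ is meaningful). I claim that $f\mapsto g:=c^{-1}f$ is a bijection from the set of solutions of $H_kf=\lambda f$ in $C_0^+[0,1]$ onto the set of solutions of $H_kg=g$ in $C_0^+[0,1]$. Indeed, if $H_kf=\lambda f$, then, writing $f=cg$,
$$H_kg=H_k(c^{-1}f)=c^{-k}H_kf=c^{-k}\lambda f=c^{-k}\lambda\,cg=\lambda c^{1-k}g=g,$$
the last equality using $c^{k-1}=\lambda$. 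Conversely, if $H_kg=g$ and we set $f:=cg$, then $H_kf=c^kH_kg=c^kg=c^{k-1}(cg)=\lambda f$. Since multiplication by the positive constant $c^{\pm1}$ preserves nonnegativity and nontriviality, it maps $C_0^+[0,1]$ onto itself, so the two assignments are mutually inverse bijections between the two solution sets.

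Finally I would invoke Theorem \ref{t3}: under hypothesis (\ref{I}) the equation $H_kg=g$ has a unique solution in $C_0^+[0,1]$. Transporting this uniqueness along the bijection just constructed gives a unique solution of $H_kf=\lambda f$ in $C_0^+[0,1]$, which is the desired conclusion. I do not anticipate any genuine obstacle in this argument; the only points needing care are the bookkeeping of the exponent $1/(k-1)$ and the observation that positive scaling leaves the cone $C_0^+[0,1]$ invariant, both of which are routine.
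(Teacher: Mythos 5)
Your proof is correct and is essentially the same reduction the paper uses: both arguments exploit the degree-$k$ homogeneity to reduce $H_kf=\lambda f$ to the case $\lambda=1$ settled by Theorem \ref{t3}. The only cosmetic difference is that the paper absorbs $\lambda$ into the kernel (setting $K_\lambda=K/\lambda$ and noting that condition (\ref{I}) depends only on the ratio $M/m$, hence is scale-invariant), whereas you absorb it into the unknown via $f=\lambda^{1/(k-1)}g$ and apply Theorem \ref{t3} to $K$ itself.
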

\begin{proof}
Clearly the equation $H_kf=\lambda f$ is equivalent to the following equation
\begin{equation}\label{ee7}
\int^1_0K_\lambda(t,u)f^k(u)du=f(t),
\end{equation}
where $K_\lambda(t,u)={1\over\lambda}K(t,u)$. The kernel  $K_\lambda(t,u)$ satisfies
the condition (\ref{I}) with $\tilde{m}={m\over\lambda}$ and $\tilde{M}={M\over\lambda}$. Consequently, by Theorem \ref{t3}
it follows that the equation (\ref{ee7}) has unique solution in $C_0^+[0,1]$.
\end{proof}
\begin{thm}\label{t5}
Let $k\geq 2$. If the kernel $K(t,u)$ satisfies the condition
(\ref{I}), then the equation $A_kf=f$ has unique solution in $C_0^+[0,1]$.
\end{thm}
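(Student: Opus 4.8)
The plan is to reduce the uniqueness for $A_k$ to the uniqueness for $H_k$, which has already been established in Theorem \ref{t3} under the hypothesis (\ref{I}). The bridge between the two problems is Lemma \ref{l1}, which sets up a correspondence between solutions of $A_kf=f$ and eigenfunctions of $H_k$ lying in $\mathcal M_0$. So the first step is to show that any solution $f_0\in C_0^+[0,1]$ of $A_kf=f$ is automatically strongly positive: by Proposition \ref{pr2} every such solution lies in $\mathcal F_k^0$, hence satisfies $f_0(t)\geq (m/M_0)^k>0$, so it is bounded away from zero and Lemma \ref{l1} applies.

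Next I would invoke Lemma \ref{l1} to produce, from a solution $f_0$ of $A_kf=f$, an eigenfunction $h=\sqrt[k]{f_0}\in\mathcal M_0$ of $H_k$ with eigenvalue $\lambda_0=\omega(f_0)>0$, that is $H_kh=\lambda_0 h$. The key observation is that the normalization $h(0)=1$ forces the eigenvalue: evaluating $H_kh=\lambda_0 h$ at $t=0$ gives $\lambda_0=(H_kh)(0)=\omega(h^k)$. Thus $h$ is a fixed point of the rescaled operator, or equivalently $g=\lambda_0^{1/(k-1)}h$ solves $H_kg=g$; but it is cleaner to argue directly through the $\lambda$-dependence. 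The point is that two distinct solutions $f_0,f_0'$ of $A_kf=f$ would yield two eigenfunctions $h,h'\in\mathcal M_0$ of $H_k$, possibly with different eigenvalues $\lambda_0,\lambda_0'$.

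The crux of the argument is therefore to rule out two distinct normalized eigenfunctions. By Theorem \ref{t4}, for each fixed $\lambda>0$ the equation $H_kf=\lambda f$ has a unique solution in $C_0^+[0,1]$. Applying this to $\lambda=\lambda_0$ and to $\lambda=\lambda_0'$ shows the eigenfunction is unique within each eigenvalue; what remains is to show the eigenvalue itself is determined by the normalization $h(0)=1$. Here I would use the explicit scaling relation from the proof of Theorem \ref{t2}: if $H_kg=\lambda g$ then $g\mapsto c\,g$ rescales the eigenvalue by $c^{k-1}$, so within the normalized slice $\mathcal M_0$ the eigenvalue and eigenfunction are rigidly coupled. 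Concretely, from $H_kh=\lambda_0 h$ and $h(0)=1$ together with $H_kh'=\lambda_0'h'$ and $h'(0)=1$, the unique-fixed-point statement of Theorem \ref{t4} (applied after rescaling both to solve $H_kf=f$) forces $h=h'$ and hence $\lambda_0=\lambda_0'$; taking $k$-th powers returns $f_0=f_0'$.

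I expect the main obstacle to be the bookkeeping in this last paragraph: one must be careful that the passage through Theorem \ref{t4} genuinely pins down the eigenvalue and not merely the eigenfunction for a given eigenvalue. The clean way is to observe that $f_0=h^k$ and $f_0'=(h')^k$ are both solutions of $A_kf=f$, convert each to a fixed-point problem $H_k(\cdot)=(\cdot)$ by the scaling $g=\lambda_0^{1/(k-1)}h$ as in Theorem \ref{t2}, and then apply Theorem \ref{t3} to conclude $g=g'$; since the $A_k$-solution is recovered from $g$ by normalizing at $t=0$ (dividing by its value at $0$ and taking the $k$-th power, i.e. $f_0=g^k/g^k(0)$ is determined by $g$), uniqueness of $g$ yields uniqueness of $f_0$. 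This keeps the whole argument inside the already-proven fixed-point uniqueness and avoids any delicate separate treatment of the eigenvalue.
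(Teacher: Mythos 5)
Your proposal is correct and follows essentially the same route as the paper: reduce via Lemma \ref{l1} to the Hammerstein eigenvalue problem in $\mathcal M_0$, use the scaling structure from Theorem \ref{t2} together with the fixed-point uniqueness of Theorems \ref{t3} and \ref{t4} to identify every normalized eigenfunction with a rescaling of the unique fixed point, and then recover $f_0$ uniquely. The only blemish is the exponent in your rescaling (to turn $H_kh=\lambda_0h$ into a fixed point you need $g=\lambda_0^{-1/(k-1)}h$, not $\lambda_0^{1/(k-1)}h$); the paper closes instead by noting that $f_1/f_2$ is a constant and that $A_k$ is scale-invariant, but your closing via $f_0=g^k/g^k(0)$ is an equivalent and equally valid finish.
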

\begin{proof}
Assume there are two solutions $f_1,f_2\in C^+[0,1]$, $f_1\ne f_2$, i.e. $A_kf_i=f_i$, $i=1,2$.
 By Lemma \ref{l1} the functions $h_i(t)=\sqrt[k]{f_i(t)}$, $t\in[0,1]$ are solutions of the Hammerstein's equation, i.e.
 $$H_kh_i=\lambda_ih_i,\ \ i=1,2,$$
 where $\lambda_i=\omega(f_i)>0$ and $h_i\in\mathcal M_0$. On the other hand Theorem \ref{t4} implies that $\lambda_1\ne\lambda_2$.
 Let $h_0(t)\in C^+[0,1]$ be a fixed point of the Hammerstein's operator $H_k$. Then by Theorems \ref{t2} and \ref{t4} we get
 $$h_i=\sqrt[k-1]{\lambda_i}h_0(t),\, i=1,2.$$ Consequently,
 $${f_1(t)\over f_2(t)}=\gamma^k, \ \ \mbox{with} \ \ \gamma=\sqrt[k-1]{\lambda_1\over\lambda_2}.$$
 Using this equality we obtain
 $$f_1(t)=(A_kf_1)(t)=A_k(\gamma^kf_2)=A_kf_2(t)=f_2(t).$$ This completes the proof.
\end{proof}

Consider the following Hamiltonian 
\begin{equation}\label{e11}
 H(\sigma)=-J\sum_{\langle x,y\rangle\in L}
\de_{\sigma(x)\sigma(y)}=-\sum_{\langle x,y\rangle\in L}\ln K(\sigma(x),\sigma(y)),
\end{equation}
where $J \in R\setminus \{0\}$
and $K(t,u)$ satisfies the condition (\ref{I}). Then as a corollary of 
Proposition \ref{p1} and Theorem \ref{t5}  we get the following

\begin{thm}\label{t6}
Let $k\geq 2$. If the function $K(t,u)$ of the Hamiltonian (\ref{e11}) satisfies the condition
(\ref{I}), then the model (\ref{e11}) has unique translational invariant Gibbs measure.
\end{thm}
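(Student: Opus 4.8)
The plan is to obtain Theorem~\ref{t6} as a corollary of Proposition~\ref{p1} and Theorem~\ref{t5}, the only genuine work being to match the relevant solution classes. By Proposition~\ref{p1} and the remark immediately following it, there is a one-to-one correspondence between boundary laws $h=\{h_x\}$ solving the compatibility equation (\ref{e5}) and splitting Gibbs measures $\mu$ for the Hamiltonian, encoded through $f(t,x)=\exp(h_{t,x}-h_{0,x})$. Hence counting translation-invariant splitting Gibbs measures amounts to counting translation-invariant solutions of (\ref{e5}), i.e. solutions with $f(t,x)=f(t)$ independent of $x$.

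First I would carry out the reduction of (\ref{e5}) to a fixed-point equation. For a translation-invariant $f$, and since every vertex $x\neq x^0$ has exactly $k$ direct successors, the product over $y\in S(x)$ in (\ref{e5}) collapses to a $k$-th power, so (\ref{e5}) becomes (\ref{e10}), which in operator form is $f=A_kf$ as in (\ref{e12}), with kernel $K(t,u)=\exp(J\beta\xi_{tu})$. For the Hamiltonian (\ref{e11}) this kernel is precisely the function $K$ occurring in the statement, so the hypothesis (\ref{I}) is exactly a condition on the kernel of $A_k$, and Theorem~\ref{t5} becomes applicable verbatim.

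Next I would check that the relevant solutions lie in $C_0^+[0,1]$, so that the two solution sets coincide. Any admissible boundary law satisfies $|h_{t,x}|<C$, whence $f$ is positive and bounded; feeding such an $f$ into the right-hand side of (\ref{e10}) expresses $f$ as $(Bf)^k$ with $(Bf)(t)=(Wf)(t)/(Wf)(0)$, and since $K$ is continuous and $f$ is bounded and measurable, $Wf$ is continuous, so $f=(Bf)^k$ is automatically continuous and strictly positive, i.e. $f\in C_0^+[0,1]$. Conversely, the unique solution furnished by Theorem~\ref{t5} is continuous and strictly positive on the compact interval $[0,1]$, hence $\ln f$ is bounded and produces, up to the additive gauge absorbed in the normalization $(Bf)(0)=1$, an admissible $h$ with $|h_{t,x}|<C$.

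Granting these identifications, Theorem~\ref{t5} gives, under (\ref{I}), exactly one solution of $A_kf=f$ in $C_0^+[0,1]$; pulling it back through the correspondence of Proposition~\ref{p1} yields exactly one translation-invariant splitting Gibbs measure for (\ref{e11}), which is the assertion. I expect the only delicate point to be the bookkeeping of the third paragraph, namely verifying that the two a priori distinct sets—admissible boundary laws restricted to the translation-invariant class on one side, and fixed points of $A_k$ in $C_0^+[0,1]$ on the other—genuinely agree; once the regularity bootstrap ($Wf$ continuous forces $f$ continuous) and the gauge normalization are in place, no further estimates are needed and the theorem follows.
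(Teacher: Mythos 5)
Your proposal is correct and follows exactly the route the paper takes: the paper gives no separate argument for Theorem~\ref{t6}, simply declaring it a corollary of Proposition~\ref{p1} and Theorem~\ref{t5}, which is precisely your reduction. Your third paragraph (checking that translation-invariant admissible boundary laws correspond bijectively to fixed points of $A_k$ in $C_0^+[0,1]$, via the regularity bootstrap and the normalization) supplies bookkeeping the paper leaves implicit, but introduces no new idea.
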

 
{\bf Example.} It is easy to see that the condition (\ref{I}) is satisfied iff
$${M\over m}\leq \eta_k=\sqrt[k]{{1+\sqrt{4k^2+1}\over 2k}}, \, k\geq 2.$$
Consider the following function
\begin{equation}\label{K1}
K(t,u)=\sum_{i=1}^m\sum_{j=1}^nc_{ij}t^iu^j+a, \, c_{ij}\geq 0, \, a>0.
\end{equation}
For this function we have $m=a$, $M=\sum_{i=1}^m\sum_{j=1}^nc_{ij}+a$.
The following is obvious

a) If  ${1\over a}\sum_{i=1}^m\sum_{j=1}^nc_{ij}\leq \eta_k-1$ then for function (\ref{K1}) the condition (\ref{I})
is satisfied.

b) If  ${1\over a}\sum_{i=1}^m\sum_{j=1}^nc_{ij}> \eta_k-1$ then for function (\ref{K1}) the condition (\ref{I})
is not satisfied.

{\it Remark.} Is there a kernel $K(t,u)>0$ of the equation (\ref{e10})
when the equation has more than one solutions? This is still open
problem.

\section*{ Acknowledgements}

UAR thanks the TWAS  Research Grant: 09-009 RG/Maths/As-I; UNESCO FR: 3240230333. He also thanks the Department of Algebra, University of Santiago de Compostela, Spain,  for providing financial support to
his visit to the Department.

\end{document}